\newtheorem{theorem}{Theorem}[section]
\newtheorem{lemma}[theorem]{Lemma}
\newcommand{\calE}{\mathcal{E}}
\newcommand{\calF}{\mathcal{F}}
\numberwithin{equation}{section}
\begin{document}

\baselineskip=15pt

\title{Einstein--Hermitian connection on twisted Higgs bundles}

\author[I. Biswas]{Indranil Biswas}

\address{School of Mathematics, Tata Institute of Fundamental
Research, Homi Bhabha Road, Bombay 400005, India}

\email{indranil@math.tifr.res.in}

\author[T. G\'omez]{Tom\'as L. G\'omez}

\address{Instituto de Ciencias Matem\'aticas (CSIC-UAM-UC3M-UCM),
Serrano 113bis, 28006 Madrid, Spain; and
Facultad de Ciencias Matem\'aticas,
Universidad Complutense de Madrid, 28040 Madrid, Spain}

\email{tomas.gomez@icmat.es}

\author[N. Hoffmann]{Norbert Hoffmann}

\address{Mathematisches Institut der Freien Universit\"at, Arnimallee 
3, 14195 Berlin, Germany}

\email{norbert.hoffmann@fu-berlin.de}

\author[A. Hogadi]{Amit Hogadi}

\address{School of Mathematics, Tata Institute of Fundamental
Research, Homi Bhabha Road, Bombay 400005, India}

\email{amit@math.tifr.res.in}

\subjclass[2000]{14F05, 53B35}

\date{}

\begin{abstract}
Let $X$ be a smooth projective variety over $\mathbb C$.
We prove that a twisted Higgs vector bundle $(\calE\, ,\theta)$
on $X$ admits an Einstein--Hermitian connection if and
only if $(\calE\, ,\theta)$ is polystable. A similar result for
twisted vector bundles (no Higgs fields) was proved in \cite{Wa}.
Our approach is simpler.

\textsc{R\'esum\'e.} \textbf{Connexions d'Einstein--Hermite sur les 
fibr\'es de Higgs tordus.}
Soit $X$ une vari\'et\'e projective lisse sur $\mathbb C$. Nous 
d\'emontrons
qu'un fibr\'e de Higgs tordu $(\calE\, ,\theta)$ sur $X$ poss\`ede une 
connexion d'Einstein--Hermite si et seulement si $(\calE\, ,\theta)$
est polystable. Un
r\'esultat analogue pour les fibr\'es vectoriels (d\'epourvus d'un
champ de Higgs) a \'et\'e d\'emontr\'e dans \cite{Wa}. Notre approche 
est plus simple.
\end{abstract}

\maketitle

\section{Introduction}\label{intro.}

Donaldson and Uhlenbeck--Yau proved that a vector bundle on a complex 
projective manifold
admits an Einstein--Hermitian connection if and only if it is polystable
\cite{Do}, \cite{UY}. A generalization of
Einstein--Hermitian connections for Higgs bundles was formulated by
Hitchin (for curves) and Simpson (higher dimensions). They proved that
a Higgs bundle $(\calE\, ,\theta)$ admits an Einstein--Hermitian 
connection if and only
if it is polystable \cite{Hi}, \cite{Si}.

Our aim here is to establish a similar result for twisted sheaves on a smooth
complex projective variety. Let $X$ be an irreducible smooth projective variety
over $\mathbb C$. A \textit{twisted vector bundle} on $X$ is a pair
$({\mathcal X}\, ,\calE)$, where
$$
{\mathcal X}\, \longrightarrow\, X
$$
is a gerbe banded by $\mu_n$ (the $n$--th roots of unity) for some $n$, and
$\calE$ is a vector bundle over $\mathcal X$;
see \cite{Li}, \cite{Hu}, \cite{HS}, \cite{Yo} for twisted bundles.
A twisted Higgs bundle on $X$ is a
twisted vector bundle together with a Higgs field on it.

We prove that a twisted Higgs bundle on $X$ admits an 
Einstein--Hermitian connection if and only if it is polystable (see
Theorem \ref{thm1}).

Let $G$ be a connected reductive linear algebraic group defined over 
$\mathbb C$. Theorem \ref{thm1} generalizes to twisted Higgs principal
$G$--bundles (this is explained at the end).

In \cite{Wa}, Wang proved a similar result for twisted vector bundles
without Higgs structure.

\section{Twisted Higgs bundles}

The base field will be $\mathbb C$.
For any positive integer $n$, by $\mu_n$ we will denote the finite 
subgroup of
${\mathbb C}^*$ consisting of the $n$--th roots of $1$.

Let $X$ be an irreducible smooth projective variety. Let
\begin{equation}\label{b1}
f\, :\, {\mathcal X}\,\longrightarrow\, X
\end{equation}
be a gerbe banded by $\mu_n$.
The cotangent bundle of $\mathcal X$ will be denoted by 
$\Omega^1_{\mathcal X}$.
For any nonnegative integer $i$, let $\Omega^i_{\mathcal X}\,:=\, 
\bigwedge^i\Omega^1_{\mathcal X}$ be the $i$--th exterior power.

Let
$$
\calE\,\longrightarrow\, {\mathcal X}
$$
be a vector bundle. Let $End(\calE)\,:=\, \calE\otimes \calE^*$ be the 
endomorphism
bundle. The associative algebra structure of $End(\calE)$ and the 
exterior algebra
structure of $\bigoplus_{i\geq 0}\Omega^i_{\mathcal X}$ together define 
an algebra
structure on $End(\calE)\otimes (\bigoplus_{i\geq 0}\Omega^i_{\mathcal X})$.

A \textit{Higgs field} on $\calE$ is a section $\theta$ of 
$End(\calE)\otimes \Omega^1_{\mathcal X}$
such that the section $\theta\wedge\theta$ of $End(\calE)\otimes 
\Omega^2_{\mathcal X}$ vanishes
identically.

A \textit{Higgs bundle} on $\mathcal X$ is a pair $(\calE\, ,\theta)$, 
where $\calE$ is a
vector bundle on $\mathcal X$, and $\theta$ is a Higgs field on 
$\calE$. A Higgs bundle
on $\mathcal X$ will be called a \textit{twisted Higgs bundle} on $X$. 
Given a Higgs
bundle $(\calE\, ,\theta)$ on $\mathcal X$, a coherent subsheaf $\calF$
of $\calE$ will be called a \textit{Higgs subsheaf} if $\theta(\calF)\, 
\subset\,
\calF\otimes \Omega^1_{\mathcal X}$.

Let $G$ be a complex linear algebraic group. A \textit{Higgs 
$G$--bundle} on $X$ is a
principal $G$--bundle $E_G\, \longrightarrow\, X$ and a section
$\beta\, \in\, H^0(X, \,\text{ad}(E_G)\otimes\Omega^1_X)$ such that 
$\beta\wedge\beta\,=\,0$,
where $\text{ad}(E_G)$ is the adjoint vector bundle.

Fix a very ample line bundle $L$ over $X$. The \textit{degree} of a 
torsionfree coherent
sheaf $\calF$ on $\mathcal X$ will be defined to be 
$\text{degree}((\det \calF)^{\otimes n})/n^2
\,\in\, {\mathbb Q}$. Note that $(\det \calF)^{\otimes n}$ descends to 
a line bundle on $X$;
its degree is computed using $L$. Fix a K\"ahler form $\omega_X$ on $X$ 
representing
$c_1(L)$. Since the morphism $f$ in \eqref{b1} in \'etale, the pullback
\begin{equation}\label{wom}
\omega_{\mathcal X}\, :=\, f^*\omega_X
\end{equation}
is a K\"ahler form on $\mathcal X$. A Higgs bundle $(\calE\, ,\theta)$ 
is called
\textit{stable} (respectively, \textit{semistable}) if for every Higgs 
subsheaf
$\calF$ with $1\, \leq\, {\rm rank}(\calF)\, < \text{rank}(\calE)$, the 
inequality
$$
\frac{\text{degree}(\calF)}{\text{rank}(\calF)}\, <\, 
\frac{\text{degree}(\calE)}{\text{rank}(\calE)}
~\,~{\rm (respectively,}~\frac{\text{degree}(\calF)}{\text{rank}(\calF)}
\, \leq\,
\frac{\text{degree}(\calE)}{\text{rank}(\calE)}{\rm )}
$$
holds. A semistable Higgs bundle is called \textit{polystable} if it is 
a direct sum of stable Higgs bundles.

For any vector bundle $\calE\,\longrightarrow\, {\mathcal X}$, we have a 
decomposition $\calE\,=\,\bigoplus_{\chi\in\mu^*_n}\calE_\chi$. Henceforth, we
will consider vector bundles $\calE$ with $\calE_\chi\,\not=\, 0$ for at
most one character $\chi$.

Define the homomorphism
\begin{equation}\label{b2}
\rho\,:\, \text{GL}(r, {\mathbb C}) \,\longrightarrow\, 
\text{PGL}(r,{\mathbb C}) \times {\mathbb G}_m \,=:\, H
\end{equation}
by sending $A$ to the class of $A$ and to $(\det A)^n$. Given a vector 
bundle $\calE\,\longrightarrow\, {\mathcal X}$,
the extension of its structure group along $\rho$ defines a principal 
$H$-bundle $\calE_H\,\longrightarrow\, {\mathcal X}$.
Since the inertia $\mu_n$ acts trivially on $\calE_H$, it descends to a 
principal $H$--bundle $E_H\,\longrightarrow\, X$.
A Higgs field $\theta$ on $\calE$ induces a Higgs field $\theta_H$ on 
$\calE_H$. This Higgs field $\theta_H$ on $\calE_H$ 
descends to a Higgs field on $E_H$, which we again denote by $\theta_H$.

The definitions of Higgs (semi)stable and polystable principal bundles
are recalled in \cite[p. 551]{BS}, \cite{DP}.

\begin{lemma}\label{lem-b1}
A Higgs bundle $(\calE\, ,\theta)$ on ${\mathcal X}$ is polystable if and only
if the induced Higgs $H$--bundle $( E_H, \theta_H)$ on $X$ is polystable.
\end{lemma}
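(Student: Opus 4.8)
The strategy is to translate both polystability conditions into statements about Higgs subsheaves and to match them term by term, the only genuine work being the bookkeeping across the gerbe $f$. Throughout, I write $\mu(\calF)=\mathrm{degree}(\calF)/\mathrm{rank}(\calF)$ for the slope computed with the normalized degree of the statement, and I note that the homomorphism $\rho$ in \eqref{b2} is a central isogeny with kernel $\mu_{rn}\subset\mathrm{GL}(r,\mathbb C)$: the factor $(\det A)^{n}$ is built in precisely so that the inertia $\mu_n$, which acts on $\calE$ through a single character (hence on the frame bundle by scalar matrices), is sent to $1\in H$, making descent to $X$ possible.

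First I would set up a dictionary between Higgs subsheaves of $(\calE\, ,\theta)$ on $\mathcal X$ and reductions of structure group of $(E_H\, ,\theta_H)$ on $X$. Pulling $E_H$ back along $f$ recovers $\calE_H=\calE_{\mathrm{GL}}\times^\rho H$, where $\calE_{\mathrm{GL}}$ is the frame bundle of $\calE$. Parabolic subgroups of $H=\mathrm{PGL}(r,\mathbb C)\times\mathbb G_m$ have the form $Q\times\mathbb G_m$ with $Q\subset\mathrm{PGL}(r,\mathbb C)$ parabolic, and these pull back under $\rho$ to parabolics of $\mathrm{GL}(r,\mathbb C)$ containing the centre, i.e. to flag types in $\mathbb C^r$. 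Hence a parabolic reduction of $\calE_H$ is the same as a filtration of $\calE$ by saturated Higgs subsheaves, the Higgs condition being that $\theta$ preserve the flag. Since the inertia $\mu_n$ acts trivially on $\calE_H$ and on all such reductions, they descend to $X$, and conversely every parabolic reduction of $E_H$ pulls back to one of these. Because $f$ is \'etale, this identification respects ranks and the normalized degree, the power $n$ and the division by $n^2$ being exactly what makes the determinant line bundle $(\det\calE)^{\otimes n}$ descend and carry the degree $n^2\,\mathrm{degree}(\calE)$.

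Next I would match the numerical criteria. The factor $\mathbb G_m$ in $H$ is central, so it contributes only the fixed determinant line and plays no role in the parabolic inequalities; the (semi)stability of $(E_H\, ,\theta_H)$ is thus governed entirely by reductions of the $\mathrm{PGL}(r,\mathbb C)$--factor. For the maximal parabolic attached to a rank--$p$ Higgs subsheaf $\calF\subset\calE$, the antidominant character associated with a reduction yields a line bundle whose degree is a positive multiple of $p\,\mathrm{degree}(\calE)-r\,\mathrm{degree}(\calF)=pr\,(\mu(\calE)-\mu(\calF))$. So the defining inequality for semistability of the Higgs $H$--bundle is precisely $\mu(\calF)\le\mu(\calE)$ for every Higgs subsheaf $\calF$, i.e. semistability of $(\calE\, ,\theta)$, and the two notions coincide; this is the standard $\mathrm{GL}/\mathrm{PGL}$ Geometric Invariant Theory computation once the dictionary of the previous step is fixed.

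Finally I would upgrade from semistability to polystability. By the definition recalled above, a Higgs $H$--bundle is polystable exactly when it admits a reduction to a Levi subgroup $L$ of a parabolic which is stable as an $L$--Higgs bundle; under the dictionary such a Levi reduction corresponds to a splitting $\calE=\bigoplus_i\calE_i$ into Higgs subsheaves, necessarily all of slope $\mu(\calE)$, with stability of the $L$--reduction corresponding to stability of each $\calE_i$. Hence $(E_H\, ,\theta_H)$ is polystable if and only if $\calE$ is a direct sum of stable Higgs bundles of equal slope, which is polystability of $(\calE\, ,\theta)$. I expect the main obstacle to lie not in this last, essentially formal, comparison but in the first step: checking that the normalized degree is compatible with reductions across the $\mu_n$--gerbe, and that the design of $\rho$ (the power $(\det A)^{n}$ together with the passage to $\mathrm{PGL}(r,\mathbb C)\times\mathbb G_m$) simultaneously forces descent to $X$ and preserves slopes.
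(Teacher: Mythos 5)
Your proposal is correct and follows essentially the same route as the paper: the paper's proof is precisely the observation that the central isogeny $\rho$ induces a bijection between parabolic subgroups and hence between reductions of $\calE$ over $f^{-1}(U)$ and reductions of $E_H$ over $U$, which is the dictionary you set up in your first step. Your additional details (the kernel $\mu_{rn}$, the slope computation $p\,\mathrm{degree}(\calE)-r\,\mathrm{degree}(\calF)$, and the Levi-reduction characterization of polystability) are accurate elaborations of what the paper leaves implicit.
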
 

\begin{proof}
The central isogeny $\rho$ in \eqref{b2} produces a bijection of 
parabolic subgroups.
For any parabolic subgroup $P\, \subset\, \text{GL}(r, {\mathbb C})$, 
there is a natural bijective
correspondence between the reductions of structure group of the 
principal $\text{GL}(r, {\mathbb C})$--bundle $\calE$
to $P$ over any open subset $f^{-1}(U)$ and the reductions of structure 
group of the principal $H$--bundle
$E_H$ to $\rho(P)$ over $U$. This bijection proves the lemma.
\end{proof}

\section{Einstein--Hermitian connection on polystable twisted Higgs bundles}

A \textit{Hermitian structure} on a vector bundle
$\calE$ on $\mathcal X$ is a smooth inner product on the fibers
which is invariant under the action of $\mu_n$ on 
the fibers of $\calE$. A Hermitian
structure on $\calE$ produces a $C^\infty$ complex connection on $\calE$. Let
$(\calE\, ,\theta)$ be a Higgs bundle. An \textit{Einstein--Hermitian
connection} on $(\calE\, ,\theta)$ is a Hermitian structure on $\calE$ such
that corresponding connection $\nabla$ on $\calE$ has the following
property:
$$
\Lambda_{\omega_{\mathcal X}} (\text{Curv}(\nabla) +[\theta \, 
,\theta^*]) \,=\, c\cdot
\text{Id}_{\calE}\, ,
$$
for some constant scalar $c$, where $\Lambda_{\omega_{\mathcal X}}$
is the adjoint of multiplication
by the K\"ahler form $\omega_{\mathcal X}$ (see \eqref{wom}),
$\text{Curv}(\nabla)$ is the curvature of $\nabla$,
and $\theta^*$ is the adjoint of $\theta$ constructed using the Hermitian
form on $\calE$.
 
\begin{theorem}\label{thm1}
Let $(\calE\, ,\theta)$ be a twisted Higgs bundle on $X$. Then
$(\calE\, ,\theta)$ is polystable if and only if it admits an
Einstein--Hermitian connection.
\end{theorem}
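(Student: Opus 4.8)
The plan is to reduce the assertion to the corresponding statement for the untwisted Higgs principal $H$--bundle $(E_H\, ,\theta_H)$ on $X$, using that the homomorphism $\rho$ of \eqref{b2} is a central isogeny. By Lemma \ref{lem-b1}, $(\calE\, ,\theta)$ is polystable if and only if $(E_H\, ,\theta_H)$ is polystable; and by the Hitchin--Simpson correspondence for Higgs principal bundles over the smooth projective variety $X$, $(E_H\, ,\theta_H)$ is polystable if and only if it admits an Einstein--Hermitian connection. It therefore suffices to identify Einstein--Hermitian connections on $(\calE\, ,\theta)$ with those on $(E_H\, ,\theta_H)$, and this identification is the real content of the argument.

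The key structural fact is that the differential $d\rho\, :\, \mathfrak{gl}(r,{\mathbb C})\,\longrightarrow\, \mathfrak{pgl}(r,{\mathbb C})\oplus{\mathbb C}$ is an isomorphism of Lie algebras, since $\ker\rho\,=\,\mu_{rn}\subset\text{GL}(r,{\mathbb C})$ is the finite central subgroup of scalar matrices; here $\mathfrak{pgl}(r,{\mathbb C})\cong\mathfrak{sl}(r,{\mathbb C})$, the trace--free part of $\mathfrak{gl}(r,{\mathbb C})$ maps isomorphically onto it, and the trace part maps onto the central summand ${\mathbb C}$. A Hermitian structure $h$ on $\calE$ extends, along $\rho$, to a reduction of $E_H$ to the maximal compact subgroup $K_H\,=\,\text{PU}(r)\times U(1)\subset H$, and under $d\rho$ the curvature $\text{Curv}(\nabla)$, the Higgs field $\theta$, and its adjoint $\theta^*$ go to the corresponding data for $(E_H\, ,\theta_H)$. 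Because every commutator, in particular $[\theta\, ,\theta^*]$, is trace--free, the equation
$$
\Lambda_{\omega_{\mathcal X}}\bigl(\text{Curv}(\nabla)+[\theta\, ,\theta^*]\bigr)\,=\,c\cdot\text{Id}_{\calE}
$$
is equivalent to requiring that the trace--free component of $\Lambda_{\omega_{\mathcal X}}(\text{Curv}(\nabla)+[\theta\, ,\theta^*])$ vanish and that its trace be constant. Under $d\rho$ this is exactly the Einstein--Hermitian condition for the reduction of $(E_H\, ,\theta_H)$ to $K_H$, the constant on the central factor being pinned down by $\text{degree}(\calE)$ and hence automatically correct. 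Thus $h$ is Einstein--Hermitian for $(\calE\, ,\theta)$ if and only if the induced reduction is Einstein--Hermitian for $(E_H\, ,\theta_H)$.

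It remains to check that passing from Hermitian structures on $\calE$ to $K_H$--reductions of $E_H$ is a bijection compatible with the $\mu_n$--invariance built into the notion of a Hermitian structure. Since $\rho$ restricts to a surjection $U(r)\,\longrightarrow\, K_H$ with finite kernel whose differential is an isomorphism, it induces a diffeomorphism of symmetric spaces $\text{GL}(r,{\mathbb C})/U(r)\,\longrightarrow\, H/K_H$ (each is the exponential image of the $(-1)$--eigenspace of the relevant Cartan involution, and $d\rho$ matches these eigenspaces). Fixing one $\mu_n$--invariant Hermitian structure as reference identifies the $\mu_n$--invariant Hermitian structures on $\calE$ with the sections of the associated $H/K_H$--bundle, that is, with the $K_H$--reductions of $\calE_H$; and as the inertia $\mu_n\subset\ker\rho$ acts on $\calE$ by unitary scalars, it acts trivially on this space of relative metrics, so these reductions descend to $K_H$--reductions of $E_H$ on $X$. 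Combining this bijection with the equivalence of the Einstein equations, with Lemma \ref{lem-b1}, and with the correspondence on $X$ would complete the proof. I expect the main obstacle to be the precise verification that the scalar Einstein equation on $\calE$ corresponds to the central--valued Einstein equation on $E_H$, together with a careful invocation of the existence theorem for Einstein--Hermitian connections on Higgs principal $H$--bundles; the same argument, with $\text{GL}(r,{\mathbb C})$ and $\rho$ replaced by $G$ and a suitable central isogeny, would yield the stated extension to twisted Higgs principal $G$--bundles.
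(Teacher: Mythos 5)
Your proposal is correct and follows essentially the same route as the paper: reduce via Lemma \ref{lem-b1} and the central isogeny $\rho$ to the Higgs $H$--bundle $(E_H,\theta_H)$ on $X$, invoke the existence theorem of Simpson and Biswas--Schumacher there, and transport connections back and forth using that $d\rho$ is a Lie algebra isomorphism. The only difference is that you spell out in detail (trace/trace-free decomposition, the identification of Hermitian structures with $K_H$--reductions and their descent) what the paper compresses into the phrase ``clearly Einstein--Hermitian.''
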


\begin{proof}
Let $(\calE\, ,\theta)$ be a Higgs bundle on $\mathcal X$. First
assume that $(\calE\, ,\theta)$ is polystable. From Lemma \ref{lem-b1} 
we know that the
induced Higgs $H$--bundle $(E_H\, ,\theta_H)$ on $X$ is polystable. A 
polystable
Higgs $H$--bundle on $X$ admits an Einstein--Hermitian connection 
\cite{Si}, \cite{BS}.
Since $(E_H\, , \theta_H)$ is the descent of $(\calE_H\, , \theta_H)$, an
Einstein--Hermitian connection on $(E_H\, , \theta_H)$ produces
an Einstein--Hermitian connection on $(\calE_H\, , \theta_H)$. A
connection on $\calE_H$ defines
connection on $\calE$ because the homomorphism of Lie algebras
$$
\text{Lie}(\text{GL}(r, {\mathbb C}))\, \longrightarrow\, \text{Lie}(H)
$$
induced by the homomorphism $\rho$ in \eqref{b2} is an isomorphism.
The connection on $(\calE\, ,\theta)$ induced by an
Einstein--Hermitian connection on $(\calE_H\, , \theta_H)$ is
clearly Einstein--Hermitian.

Conversely, an Einstein--Hermitian connection on $(\calE\, ,\theta)$ induces
an Einstein--Hermitian connection on the associated Higgs $H$--bundle 
$(\calE_H\, , \theta_H)$,
which, in turn, induces an Einstein--Hermitian connection on the descended
Higgs $H$--bundle $(E_H\, , \theta_H)$. Therefore, the Higgs $H$--bundle
$(E_H\, , \theta_H)$ is polystable. Hence from Lemma \ref{lem-b1} we 
conclude that
the Higgs bundle $(\calE\, ,\theta)$ is polystable.
\end{proof}

Let $G$ be a connected reductive linear algebraic group defined over
$\mathbb C$. Let $Z$ be the center of $G$; define $G'\,:=\, [G,G]$.

The above theorem holds for principal Higgs $G$--bundles on
$\mathcal{X}$. The proof is the same, but, instead of the
homomorphism \eqref{b2}, we use the homomorphism
$$
\rho: G \longrightarrow H:= G/Z \times (G/G'): g\mapsto (p(g),q(g)^n) 
\; ,
$$
where $p:G\longrightarrow G/Z$ and $q:G\longrightarrow G/G'$ are the
natural projections. Note that $G/G'\,\cong\,
\mathbb{C}^*\times \cdots \times \mathbb{C}^*$.

\medskip
\noindent
\textbf{Acknowledgements.}\, We thank the referee for comments.
The first author thanks Freie Universit\"at Berlin for hospitality
while the work was carried out; the visit was
supported by the SFB 647: Raum - Zeit - Materie.
The second author was supported in part by grant
MTM2007-63582 of the Spanish Ministerio de Educaci\'on y
Ciencia. The third author was supported by the SFB 647:
Raum - Zeit - Materie.


\end{document}